\title[Brown representability often fails]%
{Brown representability often fails for homotopy categories of
complexes}
\author{George Ciprian Modoi}
\address{Babe\c s--Bolyai University, Faculty of Mathematics and Computer Science \\
1, Mihail Kog\u alniceanu, 400084 Cluj--Napoca, Romania}
\email{cmodoi@math.ubbcluj.ro}
\author{Jan \v{S}\v{t}ov\'\i\v{c}ek}
\address{Charles University in Prague, Faculty of Mathematics and Physics \\
Department of Algebra \\ Sokolovska 83, 186 75 Praha 8, Czech
Republic } \email{stovicek@karlin.mff.cuni.cz}
\subjclass[2010]{18G35 (primary), 16D90, 55U35 (secondary).}
\keywords{Brown representability, Adjoint functor, Triangulated category, Homotopy category of complexes}
\thanks{Second named author supported by GA\v{C}R~P201/10/P084, research project MSM~0021620839 and the DFG~Schwerpunkt SPP 1388. }
\date{\today}
\renewcommand{\iff}{if and only if }
\newcommand{\st}{such that }
\newcommand{\la}{\longrightarrow}
\newcommand{\N}{\mathbb{N}}
\newcommand{\Z}{\mathbb{Z}}
\newcommand{\Reg}{\mathfrak{R}}
\DeclareMathOperator{\Hom}{Hom}
\DeclareMathOperator{\Ext}{Ext}
\newcommand{\A}{\mathcal{A}}
\newcommand{\B}{\mathcal{B}}
\newcommand{\D}{\mathcal{D}}
\newcommand{\clL}{\mathcal{L}}
\newcommand{\clS}{\mathcal{S}}
\newcommand{\T}{\mathcal{T}}
\newcommand{\U}{\mathcal{U}}
\newcommand{\X}{\mathcal{X}}
\newcommand{\ModR}{\hbox{\rm Mod-}R}
\newcommand{\Ab}{\mathrm{Ab}}
\newcommand{\Add}{\mathrm{Add}}
\newcommand{\Prod}{\mathrm{Prod}}
\newcommand{\Htp}[1]{\mathbf{K}({#1})}
\newcommand{\Loc}{\mathrm{Loc}\,}
\theoremstyle{plain}
\newtheorem{thm}{Theorem}
\newtheorem{lem}[thm]{Lemma}
\newtheorem{prop}[thm]{Proposition}
\theoremstyle{definition}
\newtheorem{defn}[thm]{Definition}
\theoremstyle{remark}
\newtheorem{rem}[thm]{Remark}
\newtheorem{expl}[thm]{Example}
\begin{document}


\begin{abstract}
We show that for the homotopy category $\Htp\Ab$ of complexes of abelian groups, both Brown representability and Brown representability for the dual fail. We also provide an example of a localizing subcategory of $\Htp\Ab$ for which the inclusion into $\Htp\Ab$ does not have a right adjoint.
\end{abstract}

\maketitle

\section*{Introduction}

Inspired by a result of Casacuberta and Neeman~\cite{CN09}, we provide in this note a class of naturally occurring triangulated categories with coproducts and products, for which both Brown representability and Brown representability for the dual (in the sense of~\cite[Definition 8.2.1]{Nee01}) fail. In particular, the homotopy category of complexes of abelian groups belongs to this class.

\smallskip

Let us shortly explain the motivation. Given a triangulated category $\T$ with coproducts and an object $Y \in \T$, the contravariant functor
$$ \T(-,Y): \T \la \Ab $$
is cohomological and sends coproducts in $\T$ to products in $\Ab$. By saying that $\T$ \emph{satisfies Brown representability}, we mean that the converse holds for $\T$. That is, each cohomological functor $\T \to \Ab$ which sends coproducts in $\T$ to products is naturally equivalent to $\T(-,Y)$ for some $Y \in \T$.

Dually, given a category $\T$ with products, we may be interested in whether covariant $\Hom$--functors $\T(Y,-)$ are characterized by the property that they are homological and send products in $\T$ to products of abelian groups. In such a case we say that $\T$ \emph{satisfies Brown representability for the dual}.

\smallskip

As noted in~\cite{CN09}, there have recently been several results confirming Brown representability for various classes of triangulated categories. A lot of attention has been given to \emph{well generated} triangulated categories; we refer to~\cite[p. 274]{Nee01} or~\cite[\S6]{Kr09} for a precise definition. This is a wide class of triangulated categories and we have the following implications (cf.~\cite[Chapter 8]{Nee01}):

\smallskip
\begin{center}
$\T$ is a well generated triangulated category. \nopagebreak[4]\\
$\Downarrow$ \nopagebreak[4]\\
$\T$ satisfies Brown representability. \nopagebreak[4]\\
$\Downarrow$ \nopagebreak[4]\\
A triangulated functor $G: \T \to \U$ which sends coproducts in $\T$ to coproducts in $\U$ has a right adjoint.
\end{center}
\smallskip

However, it turned out that many naturally occurring algebraic triangulated categories are not well generated.
In particular, the homotopy category $\Htp\ModR$ is well generated \iff $R$ is a right pure semisimple ring,
\cite[Proposition 2.6]{St08}. Here we show that for this type of categories, the second link in the implication
chain above also fails for a very formal reason. Postponing the explanation of the terminology to the next section,
we can state:

\begin{thm}\label{thm:fail-brt}
Let $\T$ be a locally well generated triangulated category. Then $\T$ satisfies Brown representability \iff $\T$
is well generated.
In particular, if $R$ is a ring which is not right pure semisimple, for instance $R=\Z$, then $\Htp\ModR$
does not satisfy Brown representability.
\end{thm}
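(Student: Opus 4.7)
The forward direction is Neeman's representability theorem for well generated triangulated categories \cite[Theorem~8.3.3]{Nee01}, so the real work lies in the converse. Assume $\T$ is locally well generated and satisfies Brown representability. Expecting the next section to fix the convention that ``locally well generated'' means every set-generated localizing subcategory of $\T$ is itself well generated, the goal becomes to produce a single set $\clS \subseteq \T$ with $\T = \Loc(\clS)$; by the local hypothesis this forces $\T$ itself to be well generated.

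Here is the plan I would pursue. For each object $X \in \T$, the subcategory $\Loc(X)$ is well generated, hence satisfies Brown representability, so the coproduct-preserving inclusion $\Loc(X) \hookrightarrow \T$ admits a right adjoint $r_X : \T \to \Loc(X)$; this yields a Bousfield decomposition of $\T$ relative to each $\Loc(X)$. I would then fix a regular cardinal $\alpha$ and consider the union of the $\Loc(X)$ as $X$ ranges over a candidate small collection of $\alpha$-compact representatives, aiming to use Brown representability for $\T$ itself to guarantee that for some sufficiently large $\alpha$ this union exhausts $\T$, producing the desired set of generators.

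The heart of the argument---and where I expect the main obstacle---is the final step: distilling from the proper class $\{\Loc(X)\}_{X\in\T}$ of well generated localizing subcategories a single set-indexed generating family. Brown representability for $\T$ should do this by contrapositive: if no set $\clS$ satisfies $\T = \Loc(\clS)$, one must manufacture a cohomological, coproduct-preserving functor $\T \to \Ab$ which fails to be representable, contradicting the hypothesis. Producing such a functor from the failure of set-generation---most plausibly via a transfinite construction chaining the Bousfield decompositions $r_X$ into an obstruction that cannot lie in any $\Loc(\clS)$---is where the technical bulk of the proof should reside.

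The ``in particular'' statement is then a direct consequence. The paper should establish, in a preparatory section, that $\Htp\ModR$ is locally well generated for every ring $R$, while \cite[Proposition~2.6]{St08} records that $\Htp\ModR$ fails to be well generated whenever $R$ is not right pure semisimple. The equivalence just proved therefore forces Brown representability to fail for $\Htp\ModR$ in that generality, and in particular for $R = \Z$.
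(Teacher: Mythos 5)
Your overall strategy is the right one --- argue by contrapositive, and exploit the fact that each set-generated localizing subcategory is well generated and hence admits a Bousfield localization --- but the proposal stops exactly where the proof actually happens: you never write down the non-representable functor, and the ``transfinite construction chaining the Bousfield decompositions'' you gesture at is not how the obstruction is produced. The missing idea is this. Exhaust $\T$ by an increasing chain $(\clS_\kappa \mid \kappa \in \Reg)$ of \emph{skeletally small} triangulated subcategories indexed by the proper class $\Reg$ of infinite regular cardinals (for $\Htp\ModR$ take complexes of $\kappa$-presented modules; in general this needs a mild set-theoretic convention, cf.\ Remark~\ref{rem:set_theory}). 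Each $\clL_\kappa = \Loc\clS_\kappa$ is well generated by the local hypothesis, and since $\T$ is not well generated, $\clL_\kappa \subsetneqq \T$; the Bousfield triangle $\Gamma_\kappa X \to X \to L_\kappa X \to \Gamma_\kappa X[1]$ applied to any $X \notin \clL_\kappa$ then yields a nonzero object $Y_\kappa = L_\kappa X \in \clS_\kappa^\perp$. The non-representable functor is the proper-class-indexed product
\[
F = \prod_{\kappa \in \Reg} \T(-, Y_\kappa),
\]
which is well defined because every object of $\T$ lies in some $\clS_\kappa$, so all but a set of the factors vanish on it; it is cohomological and sends coproducts to products.

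The second half of the gap is why $F$ cannot be representable, which your proposal does not address at all. The paper's argument: if $\eta\colon \T(-,Y) \to F$ were a natural equivalence, each composite $F \to \T(-,Y_\kappa) \to F$ of projection and inclusion would, via Yoneda, give an idempotent $e_\kappa \in \T(Y,Y)$, and $(e_\kappa \mid \kappa\in\Reg)$ would be a proper class of pairwise orthogonal nonzero idempotents inside the \emph{set} $\T(Y,Y)$ --- a contradiction. Note that no appeal to $\alpha$-compact objects or to extracting a generating set for $\T$ is needed; the proof does not establish set-generation by exhibiting generators, it directly manufactures the counterexample functor from the failure of set-generation. Your ``in particular'' paragraph is essentially right: both the local well-generation of $\Htp\ModR$ and its failure to be well generated when $R$ is not right pure semisimple are quoted from \cite[2.6 and 3.5]{St08}.
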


\smallskip

When considering the Brown representability for the dual, the situation is more delicate. First, it is still an open problem whether every well generated triangulated category has this property. However, \cite[Theorem 8.4.4]{Nee01} dualizes smoothly and we have the implication:

\smallskip
\begin{center}
$\T$ satisfies Brown representability for the dual. \nopagebreak[4]\\
$\Downarrow$ \nopagebreak[4]\\
A triangulated functor $G: \T \to \U$ which sends products in $\T$ to products in $\U$ has a left adjoint.
\end{center}
\smallskip

Unfortunately, we do not have a theorem characterizing the categories which satisfy Brown representability for the dual and covering such a wide class of categories as Theorem~\ref{thm:fail-brt}. Nevertheless, we give a necessary condition for the representability showing that many homotopy categories of complexes, including $\Htp\Ab$, cannot satisfy Brown representability for the dual.

Namely, let $\B$ be an additive category with products. For a subcategory $\U \subseteq \B$, we denote by $\Prod(\U)$ the closure of $\U$ in $\B$ under products and direct summands. If $\U = \{X\}$ is a singleton, we write just $\Prod(X)$. Further, we say that $\B$ has a \emph{product generator} if there exists $X \in \B$ \st $\B = \Prod(X)$. We then obtain the following result:

\begin{thm}\label{thm:fail-brtop}
Let $\B$ be an additive category with products. If $\Htp\B$ satisfies Brown representability for the dual, then $\B$ has a product generator. In particular $\Htp\Ab$ does not satisfy Brown representability for the dual.
\end{thm}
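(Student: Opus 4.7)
The plan is to derive, from Brown representability for the dual in $\Htp\B$, a single object $G \in \Htp\B$ whose product-and-summand closure contains the image $\B[0]$ of $\B$ under the stalk-complex embedding $X \mapsto X[0]$, and then to extract a product generator of $\B$ as the degree-$0$ component $G^0$.

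First I would exploit the hypothesis in its direct form: every homological product-preserving functor $\Htp\B \to \Ab$ is representable. Applied to
\[
H_\mathcal{S}(Z) = \prod_{X \in \mathcal{S}} \Htp\B(X[0], Z),
\]
where $\mathcal{S}$ is any set of objects of $\B$ (the functor is homological because products are exact in $\Ab$, and product-preserving because products commute), this yields a representing object which is necessarily the coproduct $\coprod_{X \in \mathcal{S}} X[0]$ in $\Htp\B$; the same argument applied to arbitrary set-indexed families of objects in $\Htp\B$ shows that $\Htp\B$ acquires all small coproducts. The next and crucial step is to promote this to a single object $G \in \Htp\B$ with $\B[0] \subseteq \Prod_{\Htp\B}(G)$. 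I plan to attempt this either by iterating the coproduct construction over a cardinal-reflective family of sets $\mathcal{S}$ and then taking an appropriate product, or by invoking the adjoint functor consequence of Brown representability for the dual (``every triangulated product-preserving functor $\Htp\B \to \U$ has a left adjoint'') applied to a well-chosen target $\U$.

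Once such $G$ is available the remainder is a short stalk-level calculation. For each $X \in \B$ we will have chain maps $\iota: X[0] \to G^I$ and $\pi: G^I \to X[0]$ in $\Htp\B$ with $\pi\iota$ homotopic to $\mathrm{id}_{X[0]}$; but any chain homotopy $h$ between chain maps $X[0] \to X[0]$ has components $h^n: X[0]^n \to X[0]^{n-1}$, all of which vanish because $X[0]$ is concentrated in degree~$0$. Hence $\pi^0 \iota^0 = \mathrm{id}_X$ on the nose in $\B$, so $X$ is a direct summand of $(G^0)^I$, i.e.\ $X \in \Prod_\B(G^0)$, giving $\B = \Prod_\B(G^0)$. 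The second assertion, that $\Htp\Ab$ fails Brown representability for the dual, then follows because $\Ab$ has no product generator: for any candidate $X_0 \in \Ab$ one has $\Hom(\Q, X_0^I) = \Hom(\Q, X_0)^I$, so if $\Hom(\Q, X_0) = 0$ (as for $X_0 = \Z$) then $\Q$ does not even embed into $X_0^I$, and a short case analysis based on divisibility and torsion obstructions rules out all other choices of $X_0$.

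The hard part will be the construction of $G$: the representability hypothesis produces a representing object for each set-indexed product of representable functors, but stitching these together into a single universal object whose product closure contains all of $\B[0]$ is where the combinatorial content of the argument lies, and this is the step where I expect the main difficulty.
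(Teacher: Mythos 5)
Your plan has a genuine gap at exactly the step you flag as the hard one, and in the form proposed it cannot be closed. Brown representability for the dual only represents one functor at a time, and every functor you can write down (such as $H_{\clS}$) is built from a \emph{set} of objects. But if $\B$ has no product generator, then no set $\clS$ of objects of $\B$ satisfies $\Prod(\clS)=\B$ either, since otherwise $\prod_{X\in\clS}X$ would already be a product generator. So ``iterating the coproduct construction over a cardinal-reflective family of sets and then taking an appropriate product'' is circular: producing such a family is equivalent to the conclusion you are trying to prove, and the adjoint-functor consequence offers no evident target $\U$ that circumvents this. The paper avoids the problem by proving the contrapositive. Assuming $\B$ has no product generator, it shows (Proposition~\ref{prop:prod_gen}, via the augmented proper right $\Prod(\U)$--resolutions of Lemma~\ref{lem:augmented} and Murfet's orthogonality result) that $^\perp\clS\neq 0$ for \emph{every} set $\clS\subseteq\Htp\B$; running this over a chain $(\clS_\kappa\mid\kappa\in\Reg)$ of skeletally small subcategories exhausting $\Htp\B$ yields a proper class of objects $Y_\kappa\in{^\perp\clS_\kappa}$, and the functor $\prod_{\kappa\in\Reg}\T(Y_\kappa,-)$ is homological and product-preserving but not representable, because a representing object would carry a proper class of pairwise orthogonal nonzero idempotent endomorphisms (Proposition~\ref{prop:well-filt}). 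Your closing stalk-complex computation (a retract of $G^I$ in $\Htp\B$ concentrated in degree $0$ is a retract of $(G^0)^I$ in $\B$) is correct, but it sits downstream of the missing construction of $G$.

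There is a second, smaller gap in your treatment of the ``in particular'' clause. Showing $\Hom(\Q,\Z^I)=0$ rules out $X_0=\Z$, but ``a short case analysis based on divisibility and torsion obstructions'' will not dispose of an arbitrary candidate $X_0$: one must exhibit, for \emph{every} abelian group $X_0$, a group outside $\Prod(X_0)$, and divisibility and torsion invariants do not distinguish reduced $p$-groups of different transfinite Ulm lengths. The paper uses precisely this transfinite invariant: the length $l(G)$ at a prime $p$ satisfies $l\bigl(\prod G_i\bigr)=\sup l(G_i)$ and is not increased by passing to summands, so any fixed $X_0$ bounds the lengths occurring in $\Prod(X_0)$ by some ordinal $\lambda$, while Walker's groups (or generalized Pr\"ufer groups) realize arbitrary lengths and thus provide a witness of length $\lambda+1$ outside $\Prod(X_0)$. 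Without an invariant of this transfinite nature, the nonexistence of a product generator for $\Ab$ is not established.
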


\smallskip

Finally, we touch the most delicate question, when a triangulated functor $G: \T \to \U$ has an adjoint. As a matter of the fact, the existence of adjoint functors can sometimes be proved even if Brown representability fails. This was done by Neeman~\cite{Nee06}, and other attempts have followed in~\cite{SaSt10,K10,BEIJR}.  Using recent results from~\cite{EGPT09,SaTr10,BS10} (and postponing the explanation of the terminology again), we give, however, a statement showing that the existence of adjoints does not come for free either:

\begin{thm}\label{thm:fail-adj}
Let $R$ be a countable ring and let $\D$ be the class of all right flat Mittag--Leffler $R$--modules. Then $\Htp\D$ is always closed under coproducts in $\Htp\ModR$, but the inclusion functor $\Htp\D \to \Htp\ModR$ has a right adjoint \iff $R$ is a right perfect ring. In particular, a right adjoint does not exist for $R = \Z$.
\end{thm}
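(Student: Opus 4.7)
The first assertion---that $\Htp\D$ is closed under coproducts in $\Htp\ModR$---reduces to showing that $\D$ itself is closed under arbitrary direct sums in $\ModR$, since coproducts in $\Htp\ModR$ are formed componentwise. The latter is a classical closure property of flat Mittag--Leffler modules, which I would simply invoke from the literature.

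For the main equivalence, the ``if'' direction is the shorter one. By Bass's theorem, over a right perfect ring every flat module is projective, so $\D = \ProjR$. The existence of a right adjoint to $\Htp{\ProjR} \hookrightarrow \Htp\ModR$ is then obtained either by the classical construction of K-projective (equivalently, DG-projective) resolutions of unbounded complexes, or, in the spirit of this paper, by a small-object argument applied to the deconstructible class $\ProjR$ using the general framework of \cite{SaTr10,BS10,EGPT09}.

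The bulk of the work is the converse. Assume $R$ is countable and that the inclusion $\Htp\D \hookrightarrow \Htp\ModR$ admits a right adjoint $\rho$. The plan is to apply $\rho$ to the stalk complex $M[0]$ of an arbitrary module $M \in \ModR$ and, by examining the counit $\varepsilon \colon \rho(M[0]) \to M[0]$ in $\Htp\ModR$, to extract a flat Mittag--Leffler precover of $M$ in $\ModR$: any morphism $D \to M$ with $D \in \D$ should factor, via the adjunction, through a canonical module-level approximation of $M$ assembled from the components of $\rho(M[0])$ and their differentials. Once every module is shown to admit a $\D$--precover, the recent results of \cite{SaTr10,BS10}, together with the deconstructibility framework of \cite{EGPT09}, force $R$ to be right perfect. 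The case $R = \Z$ then follows immediately, since $\Z$ is countable but not right perfect.

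The main obstacle is the passage from the abstract triangulated adjunction to the module-level precovering statement. The complex $\rho(M[0])$ need not be concentrated in degree zero, and its differentials encode both the approximation itself and the relations among flat Mittag--Leffler modules mapping to $M$; disentangling these to produce a genuine $\D$--precover in $\ModR$ is the subtle point. The countability hypothesis on $R$ is crucial precisely because the characterization of when $\D$ is deconstructible used here applies without additional set-theoretic assumptions only for countable rings.
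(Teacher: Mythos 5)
Your plan for the converse is the paper's plan, but you stop precisely at the step that constitutes the entire argument and declare it the unresolved ``subtle point''; in fact there is nothing to disentangle. Write $D = \rho(M[0])$ with counit $\varepsilon \colon D \to M[0]$ in $\Htp\ModR$. The $\D$--precover is simply the degree-zero component $f = \varepsilon^0 \colon D^0 \to M$, and $D^0 \in \D$ because an object of $\Htp\D$ has all components in $\D$. Given $f' \colon D' \to M$ with $D' \in \D$, view it as a chain map $D'[0] \to M[0]$; the adjunction isomorphism $\Hom_{\Htp{\D}}(D'[0], D) \cong \Hom_{\Htp{\ModR}}(D'[0], M[0])$ is composition with $\varepsilon$, so $f' = \varepsilon \circ g$ up to homotopy for some chain map $g\colon D'[0] \to D$. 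But a homotopy between two chain maps $D'[0] \to M[0]$ takes values in $M[0]^{-1} = 0$, hence is zero, so $f' = \varepsilon^0 \circ g^0 = f \circ g^0$ holds on the nose in $\ModR$. Thus $\D$ is precovering, and \cite[Theorem 6]{BS10} finishes the argument for countable $R$; no deconstructibility of $\D$ enters at this stage. Since the factorization is what you were asked to prove and you explicitly leave it open, this is a genuine gap, albeit one that closes in three lines.

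In the ``if'' direction your first proposed mechanism is incorrect: the right adjoint to $\Htp{\ProjR} \hookrightarrow \Htp\ModR$ is \emph{not} the K-projective (DG-projective) resolution functor. That functor computes morphisms into the derived category and sends every acyclic complex to $0$, whereas the right adjoint of a fully faithful inclusion must restrict to the identity (up to isomorphism) on $\Htp{\ProjR}$; over the right perfect ring $k[x]/(x^2)$ the acyclic, non-contractible complex $\cdots \overset{x}\la R \overset{x}\la R \overset{x}\la \cdots$ separates the two. The paper instead invokes Neeman's theorem \cite{Nee08} that $\Htp{\ProjR}$ is well generated, so the coproduct-preserving inclusion has a right adjoint by Brown representability \cite[Theorem 8.4.4]{Nee01}. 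Your fallback via a small-object argument could be made to work, but the well-generation route is the intended and cleaner one.
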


\subsection*{Acknowledgments}
The first named author would like to thank Simion Breaz for indicating him the argument showing that the category of abelian groups does not have a product generator, which is used in the proof of Theorem~\ref{thm:fail-brtop}.

\section{Representability for locally well generated triangulated categories}
\label{sec:brown_rep}

Let $\T$ be a triangulated category and denote by $\Reg$ the (proper) class of all infinite regular cardinal numbers. In what follows we often need an increasing chain
\[
\clS_{\aleph_0} \subseteq \clS_{\aleph_1} \subseteq \clS_{\aleph_2} \subseteq \dots \subseteq \clS_{\kappa} \subseteq \dots
\]
of skeletally small triangulated subcategories of $\T$ indexed by $\Reg$ \st the union $\bigcup_{\kappa \in \Reg} \clS_\kappa$ is the whole of $\T$. 

\begin{rem}\label{rem:set_theory}
Strictly speaking, it is not clear how to obtain such a chain in general using the axioms of ZFC alone. But there are two workarounds. First, if we work with a more concrete triangulated category, it may be possible to construct the chain directly. For example, if $\T = \Htp\ModR$, then $\clS_\kappa$ can be defined as
the subcategory of all complexes formed by $\kappa$--presented modules. Second, if we insist on general $\T$, we can adopt some suitable axiomatization of set theory which allows us to well-order the universe of all sets (eg.\ the von~Neumann--Bernays--G\"odel set theory). Then we can easily construct the chain using the induced well-ordering of objects of $\T$. The same applies to the proof of Proposition~\ref{prop:well-filt} below, where we strictly speaking use the Axiom of Choice for proper classes.
\end{rem}

Given an arbitrary full subcategory $\clS\subseteq\T$, we denote
\begin{align*}
{^\perp\clS} &= \{X\in\T\mid\T(X,S)=0\hbox{ for all }S\in\clS\}  \\
\clS^\perp   &= \{X\in\T\mid\T(S,X)=0\hbox{ for all }S\in\clS\},
\end{align*}
Note that if $\clS$ is a triangulated subcategory of $\T$, so are $^\perp\clS$ and $\clS^\perp$. Now we can formulate a simple but important obstruction to Brown representability.

\begin{prop}\label{prop:well-filt}
Let $\T$ be a triangulated category with coproducts. Suppose that $\T$ possesses an increasing chain $(\clS_\kappa \mid \kappa \in \Reg)$ of skeletally small triangulated subcategories \st $\T = \bigcup_{\kappa\in\Reg}\clS_\kappa$ and $\clS_\kappa^\perp \neq 0$ for all $\kappa\in\Reg$. Then $\T$ does not satisfy Brown representability.

Dually, suppose $\T$ is triangulated, has products and has an increasing chain $(\clS_\kappa \mid \kappa \in \Reg)$ of skeletally small triangulated subcategories \st $\T = \bigcup_{\kappa\in\Reg}\clS_\kappa$ and ${^\perp\clS_\kappa} \neq 0$ for all $\kappa\in\Reg$. Then $\T$ does not satisfy Brown representability for the dual.
\end{prop}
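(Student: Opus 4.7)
The approach is to exhibit a cohomological functor $H:\T\op\to\Ab$ that sends coproducts to products yet fails to be representable (and dually for the second assertion). Using global choice (cf.\ Remark~\ref{rem:set_theory}), I would pick, for each $\kappa\in\Reg$, a nonzero object $Y_\kappa\in\clS_\kappa^\perp$, and set
$$H(X):=\prod_{\kappa\in\Reg}\T(X,Y_\kappa).$$

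The first step is to confirm $H$ is well defined. Writing $\mu(X)$ for the least $\kappa\in\Reg$ with $X\in\clS_\kappa$ (which exists since $\T=\bigcup\clS_\kappa$), the chain condition together with $Y_\kappa\in\clS_\kappa^\perp$ forces $\T(X,Y_\kappa)=0$ whenever $\kappa\geq\mu(X)$, so that
$$H(X)=\prod_{\kappa<\mu(X)}\T(X,Y_\kappa)$$
is a genuine abelian group. Being a product of cohomological functors $H$ is cohomological, and a routine interchange of products (legitimate because each input produces a set-indexed product once one absorbs the vanishing factors) yields $H(\coprod_i X_i)=\prod_i H(X_i)$. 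Thus $H$ satisfies the hypotheses of Brown representability.

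To contradict representability, suppose $H\cong\T(-,Y)$ for some $Y\in\T$, and set $\kappa_0:=\mu(Y)$. The Yoneda lemma identifies the natural isomorphism with evaluation at some $\eta=(\eta_\kappa)_\kappa\in H(Y)=\prod_\kappa\T(Y,Y_\kappa)$, namely $f\mapsto(\eta_\kappa\circ f)_\kappa$. Since $Y\in\clS_{\kappa_0}$ and $Y_{\kappa_0}\in\clS_{\kappa_0}^\perp$, we have $\eta_{\kappa_0}\in\T(Y,Y_{\kappa_0})=0$. Evaluating at $X=Y_{\kappa_0}$, the $\kappa_0$-coordinate of any element in the image of $\T(Y_{\kappa_0},Y)\to H(Y_{\kappa_0})$ therefore vanishes, whereas $H(Y_{\kappa_0})$ visibly contains the tuple with $\mathrm{id}_{Y_{\kappa_0}}\neq 0$ in its $\kappa_0$-slot; this contradicts surjectivity. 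The dual statement is proven in the same manner, replacing $H$ by $H'(X):=\prod_\kappa\T(Y_\kappa,X)$ and choosing $Y_\kappa\in{^\perp\clS_\kappa}\setminus\{0\}$; then $\eta_{\kappa_0}\in\T(Y_{\kappa_0},Y)=0$ because $Y_{\kappa_0}\in{^\perp\clS_{\kappa_0}}$ and $Y\in\clS_{\kappa_0}$, and the same diagonal element defeats the map $f\mapsto(f\circ\eta_\kappa)_\kappa$. The main subtlety I anticipate is the set-theoretic one: the product $\prod_{\kappa\in\Reg}$ is a priori indexed over a proper class, and it is precisely the orthogonality hypothesis $\clS_\kappa^\perp\neq 0$, together with the chain being exhaustive, that collapses each evaluation onto a set-sized product and so keeps $H$ inside $\Ab$.
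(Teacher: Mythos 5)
Your proposal is correct, and it builds exactly the same functor as the paper: choose $0\neq Y_\kappa\in\clS_\kappa^\perp$ and form $\prod_{\kappa\in\Reg}\T(-,Y_\kappa)$, with the same observation that exhaustiveness of the chain plus orthogonality makes each value an essentially set-indexed product, hence a genuine abelian group. Where you diverge is the endgame. The paper derives the contradiction by noting that the compositions $F\to\T(-,Y_\kappa)\to F$ form a proper class of pairwise orthogonal nonzero idempotent endotransformations of $F$, which under Yoneda would yield a proper class of distinct idempotents inside the \emph{set} $\T(Y,Y)$. You instead apply Yoneda to identify the putative isomorphism with an element $\eta\in H(Y)$, observe that its $\kappa_0$-component dies for $\kappa_0=\mu(Y)$ because $Y\in\clS_{\kappa_0}$ and $Y_{\kappa_0}\in\clS_{\kappa_0}^\perp$, and then defeat surjectivity at the single test object $Y_{\kappa_0}$ using the tuple carrying $\mathrm{id}_{Y_{\kappa_0}}$. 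Both arguments are sound; yours is more concrete in that it exhibits one explicit object where the comparison map fails to be onto, while the paper's idempotent argument never needs to locate $Y$ within the chain and only uses that $\T(Y,Y)$ is a set. Your treatment of the dual half is the correct mirror image.
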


\begin{proof}
We prove only the first part, the other is dual. Choose for each $\kappa\in\Reg$ an object $0\neq Y_\kappa\in{\clS_\kappa^\perp}$. We consider the functor
\[ F=\prod_{\kappa\in\Reg}\T(-,Y_\kappa): \T \la \Ab. \]
Note that $F$ is a well-defined functor. Indeed, recall that any $X\in\T$ is contained in $\clS_\kappa$ for some $\kappa \in \Reg$, so $\T(X,Y_\lambda) = 0$ for all $\lambda \ge \kappa$ and the product defining $FX$ is essentially set-indexed. Moreover, $F$ is homological and sends coproducts to products.

Now we are essentially done, since if $F$ were represented by some object in $\T$, it would have to be the product of $(Y_\kappa \mid \kappa \in \Reg)$ in $\T$, which cannot exist. To give a formal argument, assume for the moment that there is some $Y \in \T$ and a natural equivalence
\[ \eta: \T(-,Y)\la F. \]
For each $\kappa \in \Reg$ we then have an idempotent natural transformation $\epsilon_\kappa: F \to F$ given as the composition
\[ F\la\T(-,Y_\kappa) \la F \]
which, by the Yoneda lemma, induces an idempotent morphism $e_\kappa: Y \to Y$ in $\T$. Since $(\epsilon_\kappa \mid \kappa \in \Reg)$ is a proper class of pairwise orthogonal non-zero idempotent endotransformations of $F$, the collection $(e_\kappa \mid \kappa \in \Reg)$ would have to be a proper class of endomorphisms of $Y$ with the same properties. This is absurd since $\T(Y,Y)$ is a set.
\end{proof}

Let us next explain the terminology necessary for Theorem~\ref{thm:fail-brt}.

\begin{defn}\label{def:loc_well_gen}
Let $\T$ be a triangulated category with coproducts. A full triangulated subcategory $\clL$ of $\T$ is called \emph{localizing} if it is also closed under taking coproducts in $\T$. Given a class of objects $\clS \subseteq \T$, we denote by $\Loc\clS$ the smallest localizing subcategory of $\T$ containing $\clS$.

The category $\T$ is called \emph{locally well generated} (in the sense of~\cite[Definition 3.1]{St08}) if for any set $\clS$ (not a proper class!) of objects of $\T$, $\Loc\clS$ is well generated.
\end{defn}

Now we are ready to give a proof of Theorem \ref{thm:fail-brt}. For a more concrete construction of a non-representable functor $\Htp\Ab \to \Ab$, see Example~\ref{expl:K(Ab)} below.

\begin{proof}[Proof of Theorem \ref{thm:fail-brt}] 
If $\T$ is well generated, or equivalently $\T = \Loc\clS$ for some set $\clS$, then Brown representability holds by~\cite[Proposition 8.4.2]{Nee01}. Let us, therefore, assume that $\T$ is \emph{not} well generated.

As discussed above, we have an increasing chain $(\clS_\kappa \mid \kappa \in \Reg)$ of skeletally small triangulated subcategories of $\T$ \st $\T = \bigcup_{\kappa\in\Reg}\clS_\kappa$. Let us put $\clL_\kappa = \Loc\clS_\kappa$; by definition each $\clL_\kappa$ is well generated and our assumption ensures $\clL_\kappa \subsetneqq \T$. It follows from~\cite[9.1.13 and 9.1.19]{Nee01} that each $X \in \T$ admits a triangle
\[
\Gamma_\kappa X \la X \la L_\kappa X \la \Gamma_\kappa X[1]
\eqno{(*)}
\]
with $\Gamma_\kappa X \in \clL_\kappa$ and $L_\kappa X \in \clS_\kappa^\perp$.
%
%
So given arbitrary $X \in \T \setminus \clL_\kappa$ it follows $0 \ne L_\kappa X \in \clS_\kappa^\perp$. Now we just apply  Proposition \ref{prop:well-filt}.

Finally, the second part of the theorem follows from~\cite[2.6 and 3.5]{St08}: If $R$ is a ring which is
not right pure semisimple, $\Htp\ModR$ is locally well generated but not well generated.
\end{proof}

\section{Representability for the dual}
\label{sec:dual}

In this section we discuss Brown representability for the dual and prove Theorem \ref{thm:fail-brtop}. Based on \cite[Definition 2.24]{M07}, we introduce the following concept:

\begin{defn}\label{def:augmented}
Let $\B$ be an additive category and $\X$ a full subcategory. Given $M \in \B$, by an \emph{augmented proper right $\X$--resolution} we understand a complex of the form
\[ X_M: \qquad \dots \la 0 \la 0 \la M \la X^0 \la X^1 \la X^2 \la \cdots, \]
\st $X^i \in \X$ for all $i \ge 0$ and $\Hom_{\Htp\B}(X_M, X'[n]) = 0$ for all $X' \in \X$ and $n \in \Z$.
\end{defn}

A favorable fact is that such resolutions often do exist.

\begin{lem}\label{lem:augmented}
Let $\B$ be an additive category with products and splitting idempotents, let $X \in \B$ and put $\X = \Prod(X)$. Then any $M \in \B$ admits an augmented proper right $\X$--resolution $X_M \in \Htp\B$. Moreover, $X_M = 0$ in $\Htp\B$ \iff $M \in \X$.
\end{lem}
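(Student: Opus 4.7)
The plan is to build $X_M$ step by step using, for each object $N \in \B$ that shows up, the canonical morphism
$\phi^N \colon N \to X^{\Hom_\B(N,X)}$
whose $f$-component is $f$ itself for each $f\colon N\to X$. The first thing to establish is that $\phi^N$ is an \emph{$\X$-preenvelope}, meaning $\Hom_\B(\phi^N, X')$ is surjective for every $X'\in\X$. For $X'=X$ this is immediate by projecting to the appropriate component; for $X'=X^I$ it extends componentwise; and for direct summands of products it follows because $\B$ has splitting idempotents. This is the one place both hypotheses on $\B$ are really needed.

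For existence of $X_M$, set $X^0:=X^{\Hom_\B(M,X)}$ and $d^{-1}:=\phi^M$. Inductively, once $d^{i-1}\colon X^{i-1}\to X^i$ has been defined with $d^{i-1}d^{i-2}=0$, put $\Sigma_i:=\{g\colon X^i\to X \mid g\,d^{i-1}=0\}$, let $X^{i+1}:=X^{\Sigma_i}$, and let $d^i$ have $g$-component equal to $g$ for each $g\in\Sigma_i$. Then $d^i d^{i-1}=0$ by construction. The defining condition $\Hom_{\Htp\B}(X_M, X'[n])=0$ for all $X'\in\X$ and $n\in\Z$ translates into acyclicity of the cochain complex $\Hom_\B(X_M, X')$. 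Via the same product-and-summand reduction as above, this reduces to two facts built into the construction: every $f\colon M\to X'$ factors through $d^{-1}$ (the preenvelope property of $\phi^M$), and every $f\colon X^i\to X'$ with $f d^{i-1}=0$ factors through $d^i$ (the analogous property provided by $\Sigma_i$).

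For the \emph{moreover} part, suppose first that $X_M\simeq 0$ in $\Htp\B$ via a contracting homotopy $(h^i)$. Since $X_M^{-2}=0$, the homotopy identity in degree $-1$ reduces to $h^0 d^{-1}=\mathrm{id}_M$, so $d^{-1}$ is a split monomorphism; hence $M$ is a direct summand of $X^0\in\X$, and as $\X=\Prod(X)$ is closed under summands, $M\in\X$. Conversely, if $M\in\X$, then $\mathrm{id}_M$ is a cocycle in the (acyclic) complex $\Hom_\B(X_M, M)$ obtained by taking $X':=M$, so $\mathrm{id}_M = h^0 d^{-1}$ for some $h^0\colon X^0\to M$. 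One then constructs $h^{i+1}\colon X^{i+1}\to X^i$ inductively: a direct computation using $d\circ d=0$ together with $h^i d^{i-1}=\mathrm{id}_{X^{i-1}}-d^{i-2}h^{i-1}$ shows that $\mathrm{id}_{X^i}-d^{i-1}h^i$ kills $d^{i-1}$ on the right, hence is a cocycle in $\Hom_\B(X_M,X^i)$; acyclicity of this complex (valid since $X^i\in\X$) then supplies $h^{i+1}$ with $h^{i+1}d^i+d^{i-1}h^i=\mathrm{id}_{X^i}$, assembling a contracting homotopy.

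The main obstacle I expect is cleanly isolating the preenvelope property and propagating it from $X$ to all of $\X=\Prod(X)$, because this single statement has to do double duty: it underlies the acyclicity of $\Hom_\B(X_M,X')$ during the existence step and also drives the inductive construction of the contracting homotopy in the converse. Once that point is in hand, the rest of the argument is bookkeeping.
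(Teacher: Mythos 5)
Your construction is exactly the one in the paper ($X^0=X^{\Hom_\B(M,X)}$, then $X^{i+1}=X^{\mathcal{Z}_i}$ with $\mathcal{Z}_i$ the maps killing $d^{i-1}$), and you correctly fill in the two verifications the paper leaves implicit: acyclicity of $\Hom_\B(X_M,X')$ via the preenvelope property, and the inductive contracting homotopy when $M\in\X$. One cosmetic quibble: passing the preenvelope property to a direct summand $X'$ of $X^I$ only uses the retraction $X^I\to X'$ that exists by the very definition of $\Prod(X)$, not idempotent splitting in $\B$ --- consistently with Remark~\ref{rem:split_idemp}, which notes the lemma holds without that hypothesis.
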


\begin{rem}\label{rem:split_idemp}
The lemma is true also without $\B$ having splitting idempotents, but we keep the assumption for the sake of simplicity.
\end{rem}

\begin{proof}
We will construct the terms $X^i$ of an augmented resolution
\[ X_M: \qquad \dots \la 0 \la 0 \la M \overset{d^{-1}}\la X^0 \overset{d^0}\la X^1 \overset{d^1}\la X^2 \overset{d^2}\la \cdots \]
by induction on $i$. We put $X^0 = X^{\Hom_\B(M,X)}$ and take for $d^{-1}$ the obvious morphism. Having constructed $X^i$ for $i \ge 0$, we set
\[ \mathcal{Z}_i = \{ f \in \Hom_\B(X^i,X) \mid f \circ d^{i-1} = 0 \}, \]
Then we can take $X^{i+1} = X^{\mathcal{Z}_i}$ and construct $d^i: X^i \to X^{i+1}$ in the obvious way.

For the second part, assume that $X_M = 0$ in $\Htp\B$, so it is a contractible complex. In particular, $d^{-1}: M \to X^0$ splits, so $M \in \Prod(X) = \X$. The other implication is easy.
\end{proof}

Now we show a consequence of non-existence of a product generator for~$\B$, which is important in connection with Proposition~\ref{prop:well-filt}.

\begin{prop}\label{prop:prod_gen}
Let $\B$ be an additive category with products and splitting idempotents. If $\B$ does not have a product generator, then $^\perp\clS\neq 0$ in $\Htp\B$ for
every set (not a proper class!) $\clS \subseteq \Htp\B$.
\end{prop}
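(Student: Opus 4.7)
The plan is to exhibit, for a given set $\clS \subseteq \Htp\B$, a single nonzero object $Y \in \Htp\B$ lying in ${^\perp\clS}$. I would take $X = \prod_{S \in \clS,\, n \in \Z} S^n$, a set-indexed product in $\B$. In any additive category with products, each factor of a product is canonically a retract of that product (the section being the unique morphism into $X$ whose only nonzero component is $\mathrm{id}_{S^n}$), so every $S^n$ lies in $\X := \Prod(X)$. The hypothesis that $\B$ has no product generator now provides some $M \in \B \setminus \X$, and Lemma~\ref{lem:augmented} produces an augmented proper right $\X$-resolution $X_M$ which is nonzero in $\Htp\B$ precisely because $M \notin \X$.

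It then remains to verify $\Hom_{\Htp\B}(X_M, S) = 0$ for each $S \in \clS$. Since Definition~\ref{def:augmented} alone only provides vanishing against stalk complexes of objects of $\X$, I would unpack the construction in Lemma~\ref{lem:augmented} to extract a stronger property of its differentials: $d^{-1}\colon M \to X^0$ is a $\X$-preenvelope of $M$, and more generally each $d^k\colon X^k \to X^{k+1}$ has the property that any $g\colon X^k \to X' \in \X$ satisfying $g \circ d^{k-1} = 0$ factors through $d^k$. Both statements, built into the construction for the case $X' = X$, extend to arbitrary retracts of powers of $X$ by the universal property of products together with the bilinearity of composition.

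Using this, a null-homotopy $h = (h^n)_{n \in \Z}$ of a chain map $f\colon X_M \to S$ can be built by induction, starting from $h^n = 0$ for $n \leq -1$. The preenvelope property of $d^{-1}$ applied to $f^{-1}\colon M \to S^{-1}$ produces $h^0\colon X^0 \to S^{-1}$ with $h^0 d^{-1} = f^{-1}$; and for $k \geq 0$, having chosen $h^k$, a short computation using the chain map condition together with the previous homotopy relation shows $(f^k - d_S^{k-1} h^k) \circ d^{k-1} = 0$, so that the preenvelope property of $d^k$ furnishes the required $h^{k+1}\colon X^{k+1} \to S^k$.

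I expect the main subtlety to be that the complexes $S \in \clS$ may be unbounded in both directions, which typically obstructs direct devissage in a homotopy category (one cannot, for example, simply replace $S$ by successive stupid truncations and pass to a limit of Hom-sets). This obstacle does not actually manifest here, however, because $X_M$ is bounded below: the inductive null-homotopy construction propagates only in the increasing direction of $n$, and no convergence argument is ever needed, irrespective of the shape of $S$.
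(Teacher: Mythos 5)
Your proposal is correct and follows essentially the same route as the paper: reduce to $\X = \Prod(X)$ for $X$ the product of all components of complexes in $\clS$, pick $M \in \B \setminus \X$, and use the augmented proper right $\X$--resolution $X_M$ from Lemma~\ref{lem:augmented} as the desired nonzero object of $^\perp\clS$. The only difference is that where the paper simply cites \cite[Proposition 2.27]{M07} for the orthogonality $\Hom_{\Htp\B}(X_M,S)=0$, you prove it directly via the inductive null-homotopy construction (noting, as the paper also does, that boundedness below of $X_M$ is what makes the induction terminate without any convergence issue); your verification that the factorization property of the differentials extends from $X$ to arbitrary retracts of powers of $X$ is exactly the content of that cited result.
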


\begin{proof}
Suppose $\B$ has no product generator and $\clS \subseteq \Htp\B$ is a set of complexes. Let $\U \subseteq \B$ be the set of all objects occurring in the components of complexes in $\clS$, and let $\X = \Prod(\U)$. Then clearly $\clS\subseteq\Htp\X$, so it suffices to show that $^\perp\Htp\X \ne 0$ in $\Htp\B$.

To this end, we have $\X \subsetneqq \B$ since $\B$ has no product generator. Thus, we can take an object $M \in \B \setminus \X$ and construct, using Lemma~\ref{lem:augmented}, an augmented proper right $\X$--resolution $X_M$ of $M$ \st $X_M \ne 0$ in $\Htp\B$. We would like to see that $X_M \in {^\perp\Htp\X}$, but this has been proved by Murfet in~\cite[Proposition 2.27]{M07} (using crucially the fact that $X_M$ is a complex which is bounded below).
\end{proof}

Now we are ready to prove Theorem~\ref{thm:fail-brtop}:

\begin{proof}[Proof of Theorem~\ref{thm:fail-brtop}]
First of all, we may without loss of generality assume that $\B$ has splitting idempotents. If not, we replace $\B$ by its idempotent completion $\tilde\B$ (see e.g.~\cite[\S1]{BaSl01}). Since $\Htp\B$ has splitting idempotents by~\cite[Proposition 1.6.8 and Remark 1.6.9]{Nee01}, it follows that the inclusion $\Htp\B \subseteq \Htp{\tilde\B}$ is a triangle equivalence.

Next we suppose that $\B$ has no product generator and prove the existence of a non-representable homological product-preserving functor $F: \Htp\B \to \Ab$. Namely, we choose an increasing chain
\[
\clS_{\aleph_0} \subseteq \clS_{\aleph_1} \subseteq \clS_{\aleph_2} \subseteq \dots \subseteq \clS_{\kappa} \subseteq \dots
\]
of skeletally small triangulated subcategories of $\Htp\B$ indexed by $\Reg$ \st the union $\bigcup_{\kappa \in \Reg} \clS_\kappa$ is the whole of $\Htp\B$ (cf.\ Remark~\ref{rem:set_theory}). Then, however, Proposition~\ref{prop:prod_gen} ensures that $^\perp\clS_\kappa \ne 0$ in $\Htp\B$ for each $\kappa\in\Reg$, and so we are in the situation of Proposition~\ref{prop:well-filt}, which asserts the existence of such a functor.

Finally, we must prove that $\Ab$ has no product generator. For this purpose, let us fix a prime number $p \in \N$. Using the notation from~\cite[\S XI.65]{Fu73}, we define inductively for every abelian group $G$ and every ordinal $\sigma$:
\[
p^{\sigma}G =
\begin{cases}
G, & {\rm if\ } \sigma = 0\\
p (p^{\sigma-1}G), & {\rm if\ } \sigma\ {\rm is\ non\ limit.}\\
\bigcap\limits_{\rho<\sigma}p^{\rho}G, & {\rm if\ } \sigma\ {\rm is\ limit.}
\end{cases}
\]
The \emph{length} $l(G)$ of the group $G$ is then by definition the minimum ordinal $\lambda$ such that $p^{\lambda+1}G=p^{\lambda}G$. Note that for any family $(G_i \mid i \in I)$ of abelian groups, we have the formula
\[ l\big(\prod G_i\big) = \sup\big\{ l(G_i) \mid i \in I \big\}. \]
Thus, to prove that $\Ab$ has no product generator, it suffices to construct abelian groups of arbitrary length. However, such families of groups are known. For instance Walker's groups $P_\beta$~\cite{W74} (whose construction can also be found in~\cite[Construction C.2.1]{Nee01}) or generalized Pr\"ufer groups~\cite[pp. 85--86]{Fu73}.
\end{proof}

\begin{rem} \label{rem:no_product_generator}
The non-existence of a product generator for $\ModR$ seems to be a much more widespread phenomenon. If $X \in \ModR$ is a product generator, then $\Ext^1_R(M,X) = 0$ implies that $M$ is projective for each $M \in \ModR$. That is, $X$ is a \emph{projective test module} in the sense of~\cite[p. 408]{EM02}. If $R$ is not right perfect it is, however, consistent with ZFC~+~GCH that there are no projective test modules and in particular no product generators. We are grateful to the anonymous referee for making us aware of this fact.
\end{rem}

We conclude the section with more concrete examples of non-representable (co)homological functors $\Htp\Ab \to \Ab$.

\begin{expl}\label{expl:K(Ab)}
Let us for each $\kappa \in \Reg$ denote by $\A_\kappa$ the full subcategory of $\Ab$ formed by all groups of cardinality smaller than $\kappa$, and put $\T = \Htp\Ab$.

If we take for a given $\kappa$ a group $P_\kappa$ of length $\kappa+1$ (e.g.\ Walker's group $P_\kappa$ from~\cite{W74}), then clearly $P_\kappa \not\in \Prod(\A_\kappa)$, since the length of any group from $\Prod\A_\kappa$ is at most $\kappa$. Thus, recalling the arguments above, we see that the augmented proper right $\Prod(\A_\kappa)$--resolution of $P_\kappa$, which we denote by $Y_\kappa$, is nonzero in $\Htp\Ab$ and belongs to $^\perp\Htp{\Prod(\A_\kappa)}$. In particular, the functor
\[ F=\prod_{\kappa\in\Reg}\T(Y_\kappa,-): \T \la \Ab, \]
is a well-defined homological functor which sends products in $\T$ to products of abelian groups, but it is not representable by an object of $\T$.

Let us also explicitly construct a contravariant non-representable functor. In fact, we will use the formally dual statement to Theorem~\ref{thm:fail-brtop} and its proof for this rather than Theorem~\ref{thm:fail-brt}. The key point is~\cite[Theorem 3.1]{Ch60} by Chase, which implies that for any uncountable $\kappa \in \Reg$, we have $\Z^\kappa \not\in \Add\A_\kappa$. Here, $\Add\A_\kappa$ denotes as usual the closure of $\A_\kappa$ in $\Ab$ under taking direct sums and summands. Therefore, denoting by $Y'_\kappa$ the augmented proper left $\Add(\A_\kappa)$--resolution of $\Z^\kappa$ (with the obvious meaning), we can infer exactly as before that the functor
\[ F'=\prod_{\kappa\in\Reg}\T(-,Y'_\kappa): \T \la \Ab, \]
is a well-defined cohomological functor which sends coproducts in $\T$ to products of abelian groups, but it is not representable by an object of $\T$.
\end{expl}

\section{The non-existence of right adjoint}

The final section is focused on Theorem~\ref{thm:fail-adj}, which is in fact an easy consequence of recent results from~\cite{EGPT09,SaTr10,BS10}. Let us explain the terminology. Given a ring $R$, a right $R$--module $M$ is called \emph{Mittag--Leffler} if the canonical map of groups
\[ M \otimes_R \left(\prod_{i\in I}N_i\right) \la \prod_{i\in I}(M\otimes_RN_i) \]
is injective for each family of left $R$-modules $(N_i\mid i\in I)$. This concept comes from~\cite{RG71}.

Let $\D$ be the class of all flat Mittag--Leffler $R$--modules. There are several characterizations of modules in $\D$ already in work of Raynaud and Gruson~\cite{RG71}, but the latest one is due to Herbera and Trlifaj, \cite[Theorem 2.9]{HeTr10}: Flat Mittag-Leffler modules coincide with so called $\aleph_1$-projective modules. For $R=\Z$, this simply means that $G \in \D$ \iff each countable subgroup of $G$ is free, which is a special case of~\cite[Proposition 7]{AF89} proved by Azumaya and Facchini. Let us now prove the last theorem.

\begin{proof}[Proof of Theorem~\ref{thm:fail-adj}]
It is rather easy to see that $\D$ is closed under direct sums and contains all projective modules.

Assume first that $R$ is right perfect. Then $\D$ coincides with the class of projective modules. In particular, $\Htp\D$ is a well-generated triangulated category by~\cite[Theorem 1.1]{Nee08}, so the inclusion $\Htp\D \to \Htp\ModR$ has a right adjoint by~\cite[Theorem 8.4.4]{Nee01}.

On the other hand, assume that $\Htp\D \to \Htp\ModR$ has a right adjoint. Given any $G \in \ModR$ and considering it as a stalk complex in degree 0, we have the counit of adjunction $\varepsilon_G: D \to G$. Let us take the $R$--module homomorphism $f = \varepsilon_G^0: D^0 \to G$ in degree 0. Clearly $D^0 \in \D$ and it is easy to see that any $R$--module homomorphism $f': D' \to G$ with $D' \in \D$ factors through $f$. That is, $\D$ is what is usually called a precovering class in $\ModR$. However, according to \cite[Theorem 6]{BS10}, this implies for a countable ring $R$ that it is right perfect. 
\end{proof}

\bibliographystyle{abbrv}
\bibliography{brown_rep_bib}

\end{document}